\newtheorem{thm}{Theorem}[section]
\newtheorem{lemma}[thm]{Lemma}
\newtheorem{corollary}[thm]{Corollary}
\theoremstyle{definition}
\theoremstyle{remark}
\newcommand{\dfn}[1]{{\bf #1}\index{#1}}
\title[Random lower triangular matrices]{Macroscale behavior of random lower triangular matrices}
\author{
J. E. Pascoe \\
\\
Tapesh Yadav
}
\date{\today}
\thanks{Pascoe was partially supported by NSF-DMS Analysis grant 1953963.}
\begin{document}

\subjclass[2020]{60B20}
\keywords{Random lower triangular matrices}

\begin{abstract}
    We analyze the macroscale behavior of random lower (and therefore upper) triangular matrices with entries drawn iid from a  distribution with nonzero mean and finite variance. We show that such a matrix behaves like a probabilistic version of a Riemann sum and therefore in the limit behaves like the Volterra operator. Specifically, we analyze certain SOT-like and WOT-like modes of convergence for random lower triangular matrices to a scaled Volterra operator. We close with a brief discussion of moments.
\end{abstract}
\maketitle

\section{Introduction }

 The Wigner semicircle law states that a class of self adjoint random matrices called Wigner matrices go to semicircular element $a.s$ and in distribution asymptotically.
Specifically, if one considers a large random Hermitian matrix with entries drawn i.i.d. from a suitably nice distribution, when we look at the histogram of the eigenvalues, we see a semicircular shape, with perhaps one large exceptional eigenvalue.
The theory of free probability and random matrix theory give various ways in which we can make this convergence formal \cite{speicherbook}.

\begin{figure}[!]
    \centering
\includegraphics[width=12cm, height=5.5cm]{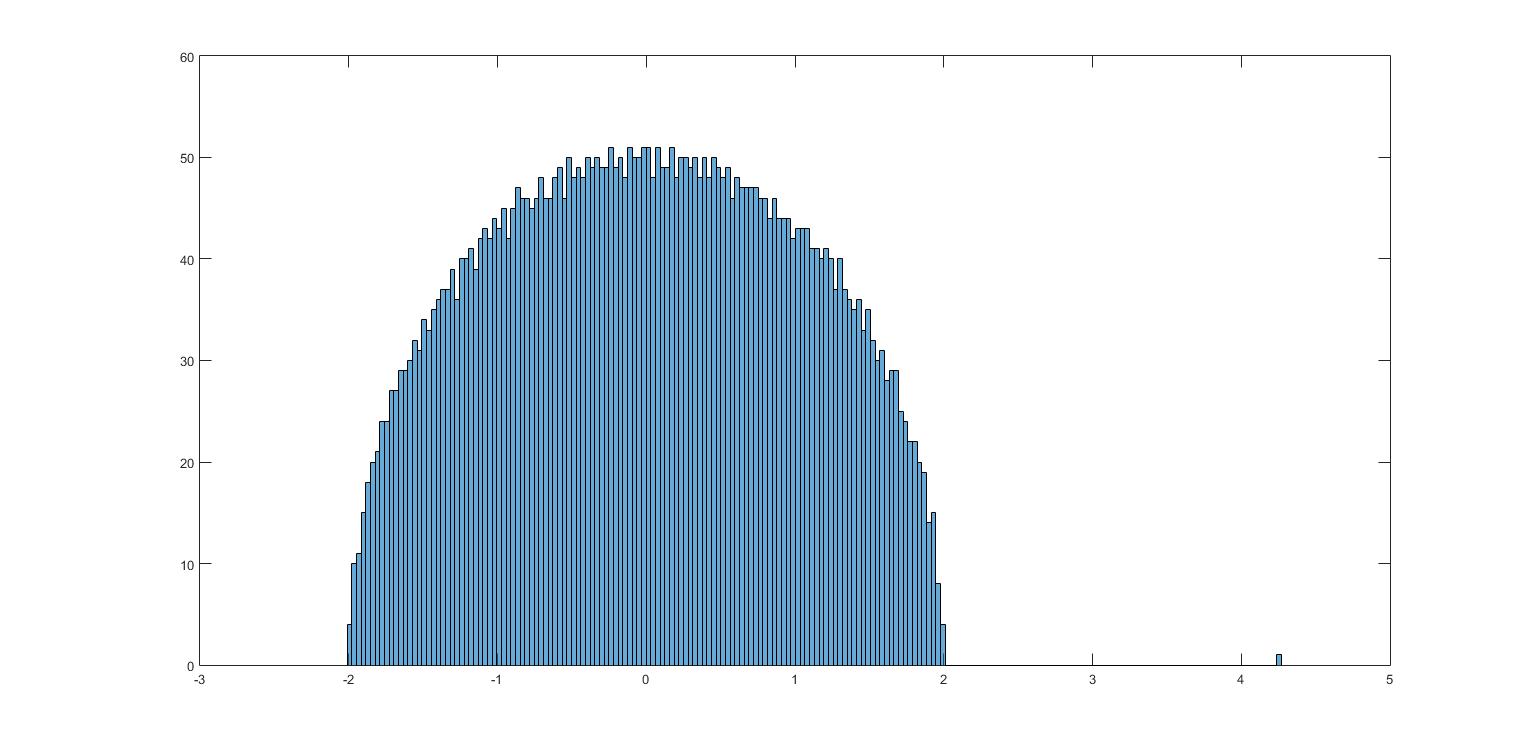}
    \caption{a  histogram of 5000 by 5000 Gaussian self adjoint (real) random matrix (GOE) with non zero mean. Each entry has mean 4/N and standard deviation $\frac{1}{\sqrt{N}}$, N=5000. Note there is one exceptional large eigenvalue, while the rest follow a semi-circular distribution.}
     \label{fig0}
\end{figure}

In their breakthrough paper, \cite{dykema}, Dykema and Haagerup looked at distribution limits of their upper triangular random matrices with iid complex Gaussian entries having mean zero  and variance $c^2/n$ in the strictly upper triangular part and iid random variables distributed according to a compactly supported measure $\mu$ on the main diagonal. The limiting non-commuting random variable exists and is called a DT-element or DT-operator. The DT-operators include Voiculescu's circular operator and elliptic deformations of it, as well as the circular free Poisson operators. Star moments of of these operators show interesting combinatorial properties, as is explored in \cite{sniady} by Sniady.  Dykema and Haagerup \cite{dykemainv} later proved that every DT operator has a nontrivial, closed, hyperinvariant subspace. Furthermore, every DT-operator generates the von Neumann algebra $L(\mathbb{F}_2)$ of the free group on two generators.\\

Let $X_N=\frac{1}{N}(X_{i,j}^N)_{i,j=1}^N$ denote an $N \times N$ random lower triangular matrix where $X_{i,j}^N$'s are iid random variables with finite mean $\mu_N$ and finite variance $\sigma_N^2$. Let $T_N$ be the deterministic lower triangular matrix with each entry being $1/N$ i.e. 

$$X_N= \frac{1}{N}
\begin{bmatrix}
X_{1,1}^N & 0 & \ldots & 0\\
X_{2,1}^N & X_{2,2}^N & \ldots & 0\\
\hdots & \hdots & \ddots & \hdots\\
X_{N,1}^N & X_{N,2}^N & ... & X_{N,N}^N
\end{bmatrix},  T_N= \frac{1}{N}
\begin{bmatrix}
1 & 0 & \ldots & 0\\
1 & 1 & \ldots & 0\\
\hdots & \hdots & \ddots & \hdots\\
1 & 1 & ... & 1
\end{bmatrix}\\ \\$$
\\

We ran some experiments to see the singular value distribution for large random matrices. Fig. \ref{fig1} shows singular value distribution of $X_N = \frac{\pi}{N} (X_{i,j}^N)_{i,j=1}^N$ for $N=5000$, where $X_{i,j}^N$ are iid Bernoulli(0,1) random variable for $1 \leq j \leq i \leq N$ and 0 otherwise. Singular values of $X_N$ for large N behave like the singular value distribution for DT operators near 0 and like the Volterra operator away from 0. Our current investigation only concerns the asymptotic description of large singular values. \\

\begin{figure}[!] 
    \centering
\includegraphics[width=12cm, height=5.5cm]{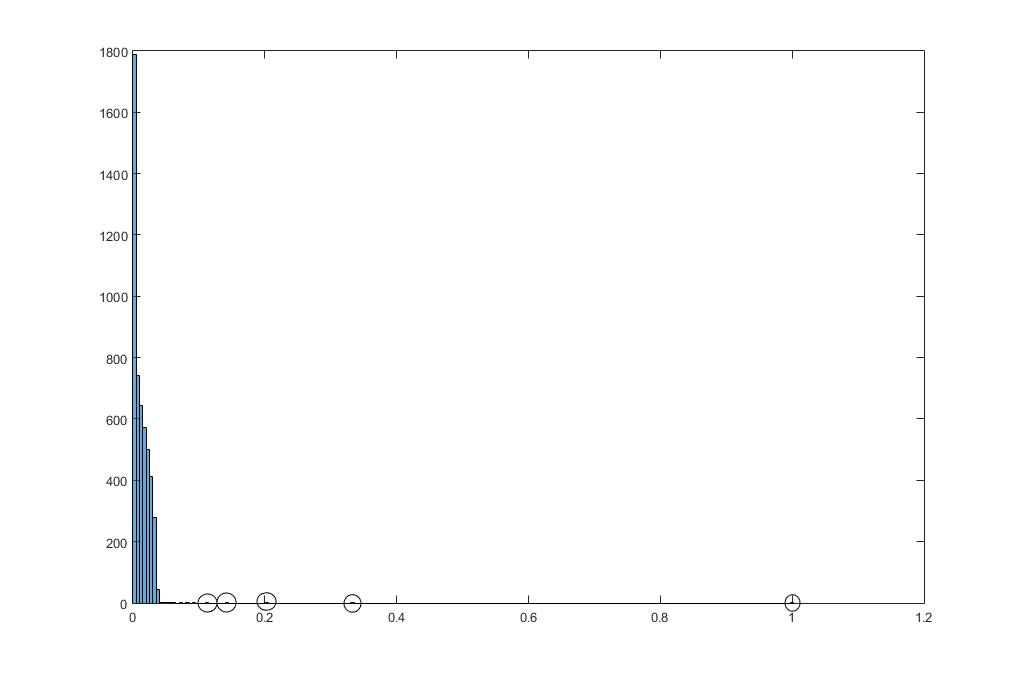}
    \caption{singular value distribution of $X_N = \frac{\pi}{N} (X_{i,j}^N)_{i,j=1}^N$ for $N=5000$, where $X_{i,j}^N$ is iid Bernoulli(0,1) random variable for all $1 \leq j \leq i \leq N$ and 0 otherwise.}
    \label{fig1}
\end{figure}

Let $V$ be the \dfn{Volterra operator} on $L^2[0,1]$ defined by 
$$V(f)(x)= \int_0^x f(t)dt$$
for all $f \in L^2[0,1]$. Let $W_N:\mathbb{C}^N \to L^2[0,1]$ be the isometry taking a vector to a piecewise constant function,  as formally defined in the next section. \\


\begin{thm}[SOT-like convergence]\label{thm}\label{sotthm}
    Let  $\{a_N\}_{N=0}^\infty \subset \mathbb{Z}^+$ be a non-negative increasing sequence.
    Let $\{k(N)\}$ be a sequence of non-negative real numbers such that $\frac{k(N) \sigma_N}{\sqrt{a_N}} \to 0$ and  $\sum_{N=1}^\infty \frac{1}{k(N)^2} < \infty$. Let $\mu_{a_N} \to \mu$.
    
    Then, for all $u \in L^2[0,1]$, $$ W_{a_N}X_{a_N}W_{a_N}^*(u) \rightarrow \mu V(u) \textrm{ a.s.}$$   
    \end{thm}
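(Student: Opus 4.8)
The plan is to split $X_N$ into its entrywise mean and its fluctuation, dispatch the mean part by a deterministic Riemann-sum estimate, and dispatch the fluctuation by a second moment bound together with the Borel--Cantelli lemma. Write $X_N = \mu_N T_N + Y_N$, where $Y_N := X_N - \mu_N T_N$ is the lower triangular matrix whose $(i,j)$ entry equals $\tfrac{1}{N}(X_{i,j}^N - \mu_N)$ for $j \le i$ and $0$ otherwise; thus the entries of $N Y_N$ on and below the diagonal are iid with mean $0$ and variance $\sigma_N^2$. I will use only two features of the isometry $W_N$ of the next section: that $W_N^* W_N = I$ (so $W_N$ is an isometry and $\|W_N^*\| \le 1$), and that $(W_N^* u)_j = \sqrt{N}\int_{(j-1)/N}^{j/N} u(t)\,dt$. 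Since $\mu_{a_N} \to \mu$ and the vectors involved stay bounded, the triangle inequality reduces the theorem to proving: (i) $W_{a_N} T_{a_N} W_{a_N}^*(u) \to V(u)$ in $L^2[0,1]$, and (ii) $W_{a_N} Y_{a_N} W_{a_N}^*(u) \to 0$ almost surely.

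For (i), set $v = W_N^* u$. A direct computation gives $(T_N v)_i = \tfrac{1}{N}\sum_{j\le i} v_j = \tfrac{1}{\sqrt N}\int_0^{i/N} u$, so $W_N T_N W_N^* u$ is the piecewise constant function taking the value $\int_0^{i/N} u$ on $[(i-1)/N, i/N)$. On that interval it differs from $V(u)(x) = \int_0^x u$ by $\int_x^{i/N} u$, whose absolute value is at most $\|u\|_{L^2} N^{-1/2}$ by Cauchy--Schwarz; hence $\|W_N T_N W_N^* u - V u\|_{L^2} \le \|u\|_{L^2} N^{-1/2}$, and in fact $\|W_N T_N W_N^* - V\| \le N^{-1/2} \to 0$ in operator norm. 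This is the precise sense in which the matrix "behaves like a Riemann sum" for $V$, and (i) follows.

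For (ii), fix $u$ and again put $v = W_N^* u$, which is deterministic with $\|v\| \le \|u\|$. Using that $W_N$ is an isometry, that $X_{i,1}^N,\dots,X_{i,i}^N$ are independent with common mean $\mu_N$ and variance $\sigma_N^2$ (so cross terms vanish), and Cauchy--Schwarz, one obtains
\[
  \mathbb{E}\,\|W_N Y_N W_N^* u\|^2 \;=\; \mathbb{E}\,\|Y_N v\|^2 \;=\; \frac{\sigma_N^2}{N^2}\sum_{i=1}^N \sum_{j=1}^i |v_j|^2 \;\le\; \frac{\sigma_N^2}{N}\,\|u\|^2 .
\]
Specializing to $N = a_N$ and applying Chebyshev's inequality at the level $t_N := \dfrac{k(N)\,\sigma_{a_N}}{\sqrt{a_N}}\,\|u\|$ gives
\[
  \mathbb{P}\!\left(\,\|W_{a_N} Y_{a_N} W_{a_N}^* u\| \ge t_N\,\right) \;\le\; \frac{1}{k(N)^2}.
\]
Since $\sum_N k(N)^{-2} < \infty$, the Borel--Cantelli lemma yields that almost surely $\|W_{a_N} Y_{a_N} W_{a_N}^* u\| < t_N$ for all sufficiently large $N$; and $t_N \to 0$ by the hypothesis $k(N)\sigma_{a_N}/\sqrt{a_N} \to 0$. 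Hence $W_{a_N} Y_{a_N} W_{a_N}^*(u) \to 0$ a.s., which together with (i) proves the theorem. (The degenerate cases $u = 0$, $\sigma_{a_N} = 0$, or $k(N) = 0$ for some $N$ are handled trivially.)

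I expect the only genuine subtlety to be the interplay of the two hypotheses on $k(N)$. The raw second moment bound $\sigma_{a_N}^2/a_N$ need not be summable in $N$, so one cannot apply Borel--Cantelli at a fixed level $\varepsilon$; the auxiliary sequence $k(N)$ is exactly the device that inflates the deviation level just enough to make the failure probabilities summable ($\sum k(N)^{-2} < \infty$) while the level itself still collapses to $0$ ($k(N)\sigma_{a_N}/\sqrt{a_N} \to 0$). Everything else --- the isometry and co-isometry identities for $W_N$, the use of independence and mean zero to annihilate cross terms, and Cauchy--Schwarz --- is routine.
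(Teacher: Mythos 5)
Your proof is correct and follows the same overall strategy as the paper: split $X_{a_N}=\mu_{a_N}T_{a_N}+Y_{a_N}$, handle the deterministic part as a Riemann-sum approximation of $V$, and kill the fluctuation with a second-moment bound, Chebyshev at the inflated level $k(N)\sigma_{a_N}\|u\|/\sqrt{a_N}$, and Borel--Cantelli --- this is exactly the content of the paper's Lemmas 2.2--2.4 and the displayed inequality (1). The one place you genuinely diverge, to your advantage, is the deterministic step: the paper proves only SOT convergence of $W_NT_NW_N^*$ to $V$ by a soft argument (pointwise convergence for continuous $f$, bounded convergence, then density plus the uniform bound $\|W_NT_NW_N^*\|\le 2$), whereas your pointwise estimate $|\int_x^{i/N}u|\le \|u\|_{L^2}N^{-1/2}$ gives the quantitative operator-norm bound $\|W_NT_NW_N^*-V\|\le N^{-1/2}$, which is strictly stronger and shorter. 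One small notational point: the theorem's hypothesis reads $k(N)\sigma_N/\sqrt{a_N}\to 0$ while you (reasonably) use $\sigma_{a_N}$, the variance of the entries of the matrix $X_{a_N}$ actually being analyzed; the paper's own proof has the same index slippage, and your reading is the coherent one.
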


For instance, we have that $W_{2^N}X_{2^N}W_{2^N}^*(f) \to \mu V(f) $ a.s. for all $f \in L^2[0,1]$ whenever $\mu_N \to \mu$ and the standard deviations are uniformly bounded. We discuss important properties of SOT-like convergence and prove Theorem \ref{sotthm} in Section \ref{sotsect}. The idea is that the $T_N$ act on vector in $\mathbb{C}^N$ consisting of function values taken from equally distanced points in interval $[0,1]$ and outputs the partial sum for that function, which converge to integral of the function as in a Riemann sum. The matrix $T_N$ have singular values similar to Volterra operator, which are $\frac{2}{\pi(2n+1)}$. 

We also have a WOT version of this theorem, which requires considerably weaker conditions for convergence.
\begin{thm}[WOT-like convergence]\label{wotthm}
    Let  $\{a_N\}_{N=0}^\infty \subset \mathbb{Z}^+$ be a non-negative increasing sequence.
    Let $\{k(N)\}$ be a sequence of non-negative real numbers such that $\frac{k(N) \sigma_N}{a_N} \to 0$ and  $\sum_{N=1}^\infty \frac{1}{k(N)^2} < \infty$. Let $\mu_{a_N} \to \mu$.
    
    Then, for all $u, v \in L^2[0,1]$, $$\langle W_{a_N}X_{a_N}W_{a_N}^*(u), v\rangle \rightarrow \mu \langle V(u), v\rangle \textrm{ a.s.}$$   
\end{thm}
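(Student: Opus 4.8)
The plan is to reduce the WOT-like statement to two independent pieces: a deterministic "Riemann-sum" limit and a probabilistic estimate controlling the fluctuation $X_{a_N} - \mu_{a_N} a_N T_{a_N}$. Writing $X_N = \mu_N(N T_N)\cdot\frac1N + (X_N - \mu_N N T_N\cdot\frac1N)$, one has
\[
W_{a_N}X_{a_N}W_{a_N}^* = \mu_{a_N}\, W_{a_N}T_{a_N}W_{a_N}^* + W_{a_N}\bigl(X_{a_N}-\mu_{a_N}(a_N T_{a_N})\cdot\tfrac1{a_N}\bigr)W_{a_N}^*.
\]
For the first term, $W_N T_N W_N^*$ is (by the construction of $W_N$ as the piecewise-constant isometry) exactly the operator that sends $u$ to the piecewise-constant average of a left-endpoint Riemann sum of $\int_0^x u$; since step functions are dense in $L^2[0,1]$ and $\|W_N T_N W_N^*\|$ is uniformly bounded (the singular values of $T_N$ are bounded, mirroring those of $V$), a standard $3\varepsilon$ density argument gives $W_N T_N W_N^* u \to V u$ in norm, hence $\langle W_{a_N}T_{a_N}W_{a_N}^* u, v\rangle \to \langle Vu,v\rangle$. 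Combined with $\mu_{a_N}\to\mu$, the first term converges to $\mu\langle Vu,v\rangle$.

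The heart of the matter is showing $\langle W_{a_N} E_{a_N} W_{a_N}^* u, v\rangle \to 0$ a.s., where $E_N := X_N - \mu_N(NT_N)/N$ is the centered lower-triangular matrix with entries of mean $0$ and variance $\sigma_N^2/N^2$. First I would reduce to $u,v$ ranging over a countable dense set (a fixed orthonormal basis, e.g. the Haar or trigonometric basis), so that "a.s. for all $u,v$" follows from "a.s. for each fixed pair" plus the uniform operator-norm bound; this is why I also need an a.s. bound on $\|W_N E_N W_N^*\| = \|E_N\|$, which I would get from $\|E_N\| \le \|E_N\|_{HS}$ together with a concentration/Borel–Cantelli argument along the subsequence using the hypothesis $\sum 1/k(N)^2 < \infty$ — this is presumably where $k(N)$ enters: $\|E_N\|_{HS}^2$ has expectation $\asymp \sigma_N^2/N$ times the number of entries ($\asymp N^2$), i.e. $\asymp \sigma_N^2$, and a deviation bound of order $k(N)\sigma_N/N \cdot(\text{something})$ is summable. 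For the fixed-pair fluctuation, $\langle W_N E_N W_N^* u, v\rangle$ is a sum $\sum_{i\ge j} E_{i,j} c_i \bar d_j$ of independent mean-zero terms (with $c,d$ the step-function coefficients of $u,v$), whose variance is $\le \frac{\sigma_N^2}{N^2}\sum_{i,j}|c_i|^2|d_j|^2 \le \frac{\sigma_N^2}{N^2}\,\|W_N^*u\|^2\|W_N^*v\|^2 = \frac{\sigma_N^2}{N^2}\|u\|^2\|v\|^2$, which under $\sigma_N^2/a_N^2 \to 0$ tends to $0$. Upgrading convergence-in-probability to a.s. convergence along the subsequence $\{a_N\}$ is again done via Chebyshev plus Borel–Cantelli, using the summability of $1/k(N)^2$ to absorb the case where $\sigma_N$ does not decay fast enough on its own.

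The main obstacle I anticipate is bookkeeping the two-sided quantification "for all $u,v$ simultaneously, almost surely": the event of convergence must be chosen independently of $u,v$, which forces the argument through a countable dense set plus a uniform-in-$N$ a.s. operator-norm bound on $E_{a_N}$, and it is the interplay between the variance decay $\sigma_N^2/a_N^2\to 0$ (enough for the bilinear form on fixed vectors) and the Hilbert–Schmidt bound with the auxiliary sequence $k(N)$ (needed for the uniform norm control and the Borel–Cantelli step) that has to be balanced carefully. A secondary, more routine, point is verifying that $W_N T_N W_N^* \to V$ strongly with the correct normalization and that $\|T_N\|$ (equivalently the discrete Volterra singular values) stays bounded; this should follow from an explicit comparison of $W_N T_N W_N^*$ with $V$ on step functions, but the constants need to be tracked so that the $1/N$ scaling in the definition of $X_N$ matches the $\int_0^x$ normalization of $V$.
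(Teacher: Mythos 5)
Your core argument is exactly the paper's: the same decomposition into the deterministic part $\mu_{a_N}W_{a_N}T_{a_N}W_{a_N}^*$ (handled by the SOT convergence $W_NT_NW_N^*\to V$ of Lemma \ref{1.2} together with $\mu_{a_N}\to\mu$) plus the centered fluctuation $X_{a_N}-\mu_{a_N}T_{a_N}$, the same variance bound $E\bigl(|\langle (X_N-\mu_N T_N)u,v\rangle|^2\bigr)\le \frac{\sigma_N^2}{N^2}\|u\|^2\|v\|^2$, and the same Chebyshev-plus-Borel--Cantelli step using $\sum 1/k(N)^2<\infty$ and $k(N)\sigma_N/a_N\to 0$. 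That part is correct and complete.

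Where you diverge is in reading the quantifier. You attempt to put all $u,v$ on a single null set, via a countable dense family plus an a.s.\ uniform bound on $\|E_{a_N}\|$ extracted from the Hilbert--Schmidt norm. The theorem, like its SOT counterpart (whose proof fixes $u$ and explicitly lets the exceptional set depend on it), is the per-pair statement; this is precisely why the convergence is called ``WOT-like,'' and the paper separately remarks that genuine WOT convergence requires the extra hypothesis that the $X_N$ be a.s.\ uniformly bounded in operator norm. Moreover, your proposed route to the uniform bound does not close under the stated hypotheses: $E\|E_N\|_{HS}^2\approx\sigma_N^2/2$, so a Chebyshev--Borel--Cantelli argument on $\|E_N\|_{HS}^2$ needs fourth moments of the entries (only finite variance is assumed), and even granting that, you would obtain $\|E_{a_N}\|$ merely bounded rather than small, and only when $\sigma_N$ is controlled. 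Dropping that extra layer leaves a proof that coincides with the paper's.
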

For instance, we can conclude WOT-like convergence along the sequence $X_N$ whenever $\mu_N \to \mu$ and the standard deviations are uniformly bounded. (Specifically, there is not enough variance to neccesitate taking a subsequence as in Theorem \ref{sotthm}.)
We discuss WOT-like convergence and prove Theorem \ref{sotthm} in Section \ref{sotsect}.

Also, one can remove the term `\textit{like}' from the above definitions if the random matrices under consideration ($X_N$'s) are uniformly bounded in operator norm a.s. For example, the $X_N$ will be uniformly bounded for
Bernoulli $0-1$ random variables with fixed mean and variance.
\\

In the last section, we give moment results for $X_N^*X_N$ for any random matrix with finite moments for each entry and of $NX_N^*X_N$ in the case of non-zero mean. The zero mean case was studied by Dykema and Haagerup \cite{dykema} where each entry of $X_N$ was Gaussian. We do not see any direct way to generalize their method to matrix $X_N$ with non-Gaussian random variables. Also, in non-zero mean case, as Figure \ref{fig1} suggest, we do not get a copy of mean zero spectrum with an exceptional eigenvalue, as is the case in non zero mean Wigner matrices. Our empirical observations show a superimposition of singular values from the Volterra operator and DT operator.

\section{SOT-like convergence} \label{sotsect}

Let $$B_N = \overline{span} \{e_1^N, \ldots, e_N^N\} \subseteq L^2[0,1],$$  where for $1\leq i \leq N  $, the function $e_i^N= $ $\sqrt{N} \textbf{1}_{[(i-1)/N,i/N]}$ and $\textbf{1}_{[(i-1)/N,i/N]}$ is the indicator function of the interval $[(i-1)/N,i/N]$. Note that $\{e_i^N\}_{i=1}^N$ form an orthonormal basis for $B_N$. We define the isometry $W_N:\mathbb{C}^N \to L^2[0,1]$ by $$W_N(a_1, ...,a_N) = \sum_{i=1}^n a_ie_i^N.$$ $W_N$ takes $\mathbb{C}^N$ onto $B_N$ isometrically. Note that $W_N^*$ is a partial isometry which sends $f$ to $(\langle f,e_1^N \rangle, ... , \langle f,e_N^N\rangle)$. \\

Let us begin with the following useful lemma.

\begin{lemma} \label{1.2}
		Let V be the Volterra operator on $L^2[0,1]$. Then, $W_{N}T_{N}W_{N}^* \to V $ in SOT.
	\end{lemma}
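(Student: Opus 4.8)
The plan is to verify SOT convergence on a dense subset and then extend by a uniform boundedness argument. First I would note that $W_N T_N W_N^*$ is the operator on $L^2[0,1]$ which acts as follows: given $f \in L^2[0,1]$, it first records the averages $c_i = \sqrt{N}\langle f, e_i^N\rangle = N\int_{(i-1)/N}^{i/N} f$, applies the lower-triangular averaging matrix $T_N$ to the vector $(c_i)$, and then reassembles the result as a step function on the intervals $[(i-1)/N, i/N]$. Concretely, $(T_N c)_i = \frac{1}{N}\sum_{j \le i} c_j$, so $W_N T_N W_N^* f$ is the step function whose value on $[(i-1)/N, i/N]$ is $\frac{1}{N}\sum_{j=1}^{i} N\int_{(j-1)/N}^{j/N} f = \sum_{j=1}^{i}\int_{(j-1)/N}^{j/N} f = \int_0^{i/N} f$. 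In other words, $W_N T_N W_N^* f$ is the step-function discretization of $Vf(x) = \int_0^x f$, sampled at the right endpoints of the dyadic-type partition and held constant on each subinterval.

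Next I would establish the convergence first for $f$ continuous on $[0,1]$ (a dense subset of $L^2[0,1]$). For such $f$, $Vf$ is Lipschitz, hence uniformly continuous, so its right-endpoint step-function approximation converges to $Vf$ uniformly on $[0,1]$, and therefore in $L^2[0,1]$; this gives $\|W_N T_N W_N^* f - Vf\|_{L^2} \to 0$. Quantitatively one can write $\|W_N T_N W_N^* f - V f\|_{L^2}^2 = \sum_i \int_{(i-1)/N}^{i/N} |Vf(i/N) - Vf(x)|^2\,dx$ and bound each integrand by the modulus of continuity of $Vf$ on an interval of length $1/N$, which tends to $0$.

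Finally I would remove the continuity hypothesis by a standard $\varepsilon/3$ argument using uniform boundedness: the operators $W_N T_N W_N^*$ are uniformly bounded in norm, since $W_N$ is an isometry and $\|T_N\|$ is bounded (indeed $\|T_N\| \le \|V\|_{HS\text{-type bound}}$, or more simply one checks $\|T_N\|_{\mathrm{op}} \le C$ uniformly in $N$, e.g. from $\|T_N\|^2 \le \|T_N\|_1\|T_N\|_\infty \le 1$ by Schur's test, each row and column sum of $N T_N$ being at most $N \cdot \frac{1}{N} = 1$). Given $f \in L^2[0,1]$ and $\varepsilon > 0$, pick continuous $g$ with $\|f - g\|_{L^2} < \varepsilon$; then $\|W_N T_N W_N^* f - Vf\| \le \|W_N T_N W_N^*(f-g)\| + \|W_N T_N W_N^* g - Vg\| + \|V(g-f)\| \le (C + \|V\|)\varepsilon + o(1)$, which is $\le 2(C+\|V\|)\varepsilon$ for $N$ large. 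Since $\varepsilon$ is arbitrary, $W_N T_N W_N^* f \to Vf$ for every $f$, i.e. SOT convergence.

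I expect the only real point requiring care is the identification in the first step — writing out $W_N T_N W_N^*$ explicitly and recognizing it as the right-endpoint Riemann-sum discretization of $V$ — together with the uniform norm bound on $T_N$; once those are in hand the convergence is the routine "Riemann sums converge" fact plus the standard density-plus-equiboundedness extension, so there is no serious obstacle.
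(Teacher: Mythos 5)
Your proposal is correct and follows the same architecture as the paper's proof: verify convergence on the dense set of continuous functions, then extend to all of $L^2[0,1]$ via the uniform bound on $\|W_NT_NW_N^*\|$. The only (cosmetic) difference is in the dense-subset step, where you identify $W_NT_NW_N^*f$ exactly as the step function taking the value $\int_0^{i/N}f$ on $[(i-1)/N,i/N]$ and invoke uniform continuity of $Vf$, while the paper argues via pointwise convergence of the Riemann sums and the bounded convergence theorem; your version is, if anything, slightly cleaner.
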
 

\begin{proof}
Let $$g_N(x)= W_NT_NW_N^*(f)(x).$$ We first show $g_N \to V(f)$ pointwise for each $f \in C([0,1])$. Without loss of generality, consider a non negative continuous function $f \in C([0,1])$. There exists $x_i^N \in [(i-1)/N,i/N]$ such that $f(x_i^N)=\sqrt{N}\langle f,e_1^N\rangle $ by intermediate value theorem. Define $a_N(x)= min\{i \in \mathbb{N}: x \leq i/N\}$. For fixed $x \in [0,1]$, $$(1/\sqrt{N})\sum_{i=1}^{a_N(x)} \langle f,e_i^N \rangle  \to \int_0^x f(t)dt$$ as $N \to \infty$. Thus $g_N(x) \to \int_0^x f$ $ = V(f)(x)$ pointwise. Therefore $$g_N(x) \to \int_0^x f(t)dt$$ in $L^2[0,1]$ by the bounded convergence theorem (every function is bounded by the sup norm of $f$).  Hence, $$lim_{N \to \infty} ||(W_NT_NW_N^* - V)(f)|| \to 0.$$  Since, we obtain convergence for all continuous functions on $[0,1],$ which are dense in $L^2[0,1]$,
and
the norms of $\{W_NT_NW_N^*\}$ and $V$ are uniformly bounded by $2,$ we have that, $||(W_NT_NW_N^* - V)(f)|| \to 0$  for all $f \in L^2[0,1]$.
\end{proof}

 For $u=(u_1,u_2,...,u_N) \in \mathbb{C}^N$, let $u^2$ denote the vector $(|u_1|^2,|u_2|^2,...,|u_N|^2) \in \mathbb{C}^N$. 
      \begin{lemma}
        Let $u$ be a unit vector in $\mathbb{C}^N$, then
        $E((\mu_NT_N-X_N)u)=0$ and
        $E((\|(\mu_NT_N-X_N)u\|)^2)\leq \|\frac{\sigma_N^2}{N}T_Nu^2\|_1$
    \end{lemma}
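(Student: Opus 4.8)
The plan is to handle the two assertions separately; both reduce to elementary computations that use only the independence and the first two moments of the entries, so the lemma is essentially a variance calculation dressed in operator notation.

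For the first equality, I would note that the $(i,j)$ entry of $\mu_N T_N - X_N$ equals $\tfrac1N(\mu_N - X_{i,j}^N)$ when $j \le i$ and is $0$ otherwise, hence has expectation $\tfrac1N(\mu_N - E[X_{i,j}^N]) = 0$ since $E[X_{i,j}^N] = \mu_N$. By linearity of expectation the $i$-th coordinate of $(\mu_N T_N - X_N)u$, which is $\tfrac1N\sum_{j\le i}(\mu_N - X_{i,j}^N)u_j$, has expectation $0$, so $E((\mu_N T_N - X_N)u) = 0$ as a vector in $\mathbb{C}^N$.

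For the second, set $Y = \mu_N T_N - X_N$ and expand $E[\|Yu\|^2] = \sum_{i=1}^N E\big[|(Yu)_i|^2\big]$. Fixing $i$ and squaring, $E[|(Yu)_i|^2] = \tfrac{1}{N^2}\sum_{j,k\le i} E\big[(\mu_N - X_{i,j}^N)\overline{(\mu_N - X_{i,k}^N)}\big]\,u_j\overline{u_k}$. The key input is that $\{X_{i,j}^N\}$ are iid with mean $\mu_N$ and variance $\sigma_N^2$, so the off-diagonal terms ($j\ne k$) factor and vanish, while each diagonal term contributes $\sigma_N^2$. Thus $E[|(Yu)_i|^2] = \tfrac{\sigma_N^2}{N^2}\sum_{j\le i}|u_j|^2 = \tfrac{\sigma_N^2}{N}(T_N u^2)_i$, using the definitions of $T_N$ and of $u^2$. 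Summing over $i$ and observing that $T_N u^2$ has nonnegative coordinates (so its $\|\cdot\|_1$ is the sum of its entries) gives $E[\|Yu\|^2] = \tfrac{\sigma_N^2}{N}\sum_i (T_N u^2)_i = \|\tfrac{\sigma_N^2}{N}T_N u^2\|_1$.

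I do not anticipate any genuine obstacle: the argument in fact yields equality rather than just the stated inequality, and I would record the identity and then note the claimed bound follows immediately. The only place requiring a little care is the bookkeeping that rewrites $\tfrac{1}{N^2}\sum_i\sum_{j\le i}|u_j|^2$ as $\tfrac1N\|T_N u^2\|_1$; and it is worth remarking that the hypothesis $\|u\|=1$ is not actually used in this estimate — it is presumably included so the lemma plugs directly into the operator-theoretic estimates that follow.
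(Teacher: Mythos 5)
Your proof is correct and follows essentially the same route as the paper: expand $\|(\mu_N T_N - X_N)u\|^2$ coordinatewise, use independence and the first two moments to kill the cross terms, and identify the resulting double sum with $\|\tfrac{\sigma_N^2}{N}T_N u^2\|_1$. Your version is in fact slightly sharper in presentation — you make explicit that the cross terms vanish in expectation and hence that equality holds, whereas the paper records this step as an inequality (which, read pointwise, would not be justified; it is the expectation of the cross terms that vanishes, exactly as you argue).
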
 
    
    \begin{proof}
    The first equality is direct. For the second inequality, observe that
    \begin{align*}
        E((\|(\mu_NT_N-X_N)u\|)^2) &= \frac{1}{N^2}E(\sum_{i=1}^N |\sum_{j=1}^i (X_{i,j} - \mu)u_j|^2) \\
        & \leq \frac{1}{N^2}E(\sum_{i=1}^N \sum_{j=1}^i |(X_{i,j} - \mu)u_j|^2) \\
        & = \frac{1}{N^2}\sum_{i=1}^N \sum_{j=1}^i E(|X_{i,j} - \mu|^2)|u_j|^2 \\
        & = \frac{1}{N^2}\sum_{i=1}^N \sum_{j=1}^i \sigma^2|u_j|^2 \\
        &= || \frac{\sigma^2}{N} T_N u^2 ||_1 
    \end{align*}
    \end{proof}
    
    For a non-negative sequence ${k(N)}$, Chebychev's inequality implies that
        \begin{equation}\label{1}
        P\bigg(\|(\mu_NT_N-X_N)u\|\geq k(N)\sqrt{\|\frac{\sigma_N^2}{N}T_Nu^2\|_1} \bigg)\leq \frac{1}{k(N)^2}.
        \end{equation}
    Therefore, we can finesse our estimate for the standard deviation into a statement about almost sure convergence.    
        \begin{lemma}
     Let $\{a_N\}_{N=0}^\infty \subset \mathbb{Z}^+$ be a non-negative increasing sequence. If there exists positive sequence $\{k(N)\}$ such that $\frac{k(N) \sigma_N}{\sqrt{a_N}} \to 0$ and $\sum_{N=1}^\infty \frac{1}{k(N)^2} < \infty$. Then $\|W_{a_N}(\mu_{a_N}T_{a_N}-X_{a_N})W_{a_N}^*u\|$ $\rightarrow 0 \textrm{ a.s.}$ for all $u \in L^2[0,1].$ 
    \end{lemma}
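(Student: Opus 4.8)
The plan is to convert the in-probability bound \eqref{1} into an almost-sure statement using the Borel--Cantelli lemma, after one short deterministic estimate.

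By homogeneity it is enough to fix a unit vector $u\in L^2[0,1]$. Set $v_N:=W_{a_N}^*u\in\mathbb{C}^{a_N}$. Since $W_{a_N}^*$ is a partial isometry we have $\|v_N\|\le\|u\|=1$, and since $W_{a_N}$ is an isometry,
\[
\bigl\|W_{a_N}(\mu_{a_N}T_{a_N}-X_{a_N})W_{a_N}^*u\bigr\|=\bigl\|(\mu_{a_N}T_{a_N}-X_{a_N})v_N\bigr\|,
\]
so it suffices to prove that the right-hand side tends to $0$ almost surely.

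Next I would bound $\bigl\|\tfrac{\sigma_{a_N}^2}{a_N}T_{a_N}v_N^2\bigr\|_1$. Every entry of $T_{a_N}$ equals $1/a_N$ and $v_N^2$ has nonnegative coordinates, so $(T_{a_N}v_N^2)_i=\tfrac1{a_N}\sum_{j\le i}|v_{N,j}|^2$; summing over $i$ and using that the coordinate $j$ occurs for $a_N-j+1\le a_N$ values of $i$ gives
\[
\Bigl\|\tfrac{\sigma_{a_N}^2}{a_N}T_{a_N}v_N^2\Bigr\|_1=\frac{\sigma_{a_N}^2}{a_N^2}\sum_{j=1}^{a_N}(a_N-j+1)\,|v_{N,j}|^2\le\frac{\sigma_{a_N}^2}{a_N}\,\|v_N\|^2\le\frac{\sigma_{a_N}^2}{a_N}.
\]
(The variance estimate preceding \eqref{1} is valid for an arbitrary vector, as its proof shows, so the fact that $v_N$ need not be a unit vector causes no trouble; one could also simply rescale.) Substituting this into \eqref{1} with $N$ replaced by $a_N$ and $u$ by $v_N$ yields
\[
P\!\left(\bigl\|(\mu_{a_N}T_{a_N}-X_{a_N})v_N\bigr\|\ge\frac{k(N)\,\sigma_{a_N}}{\sqrt{a_N}}\right)\le\frac1{k(N)^2}.
\]

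Finally, because $\sum_N k(N)^{-2}<\infty$, Borel--Cantelli shows that almost surely only finitely many of these events occur, hence almost surely there is an $N_0$ with $\bigl\|(\mu_{a_N}T_{a_N}-X_{a_N})v_N\bigr\|<k(N)\sigma_{a_N}/\sqrt{a_N}$ for every $N\ge N_0$; since $k(N)\sigma_{a_N}/\sqrt{a_N}\to0$ by hypothesis, the left-hand side tends to $0$ almost surely, as desired. I do not expect a genuine obstacle: the computation is routine and the main things to track are the distinction between the isometry $W_{a_N}$ and the partial isometry $W_{a_N}^*$, and the fact that the exceptional null set is allowed to depend on $u$ (which matches the statement ``for all $u$''). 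The only mildly delicate point is the bookkeeping giving the factor $a_N-j+1\le a_N$, which is exactly what absorbs the extra $1/a_N$ coming from $T_{a_N}$.
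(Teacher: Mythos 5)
Your proposal is correct and follows essentially the same route as the paper: reduce to a unit vector, use that $W_{a_N}^*$ is a partial isometry to get $\|v_N\|\le 1$ and hence $\|\tfrac{\sigma^2}{a_N}T_{a_N}v_N^2\|_1\le \sigma^2/a_N$, then apply the Chebyshev bound \eqref{1} together with Borel--Cantelli and the hypothesis $k(N)\sigma_N/\sqrt{a_N}\to 0$. The only difference is that you carry out the bound $\|T_{a_N}v_N^2\|_1\le\|v_N\|^2$ explicitly via the $(a_N-j+1)$ counting, which the paper simply asserts.
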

    
   \begin{proof}
    From Eq. (\ref{1}), we get that, 
        $$P(\|(\mu_{a_N}T_{a_N}-X_{a_N})u\|\geq k(N) \sqrt{\|\frac{\sigma_N^2}{a_N}T_{a_N}u^2\|_1})\leq \frac{1}{k(N)^2}.$$ 
         The right hand side is summable. So, by the Borel-Cantelli lemma, the probability that the events \{$\|(\mu_{a_N}T_{a_N}-X_{a_N})u\|\geq k(N)  \sqrt{\|\frac{\sigma_N^2}{a_N}T_{a_N}u^2\|_1}$\}, occur infinitely often is 0. Observe that for a unit vector $u$, if $u_{a_N}$ denotes $W_{a_N}^*u$, then $u_{a_N}$ has norm less than or equal to 1 (as $W^*$ is projection). So, $||T_{a_N}u_{a_N}^2||_1 \leq 1$.  hence $k(N) \sqrt{\|\frac{\sigma_N^2}{a_N}T_{a_N}u^2\|_1} \leq \frac{k(N) \sigma_N}{\sqrt{a_N}}$. So, \\ 
       $$ \|(\mu_{a_N}T_{a_N}-X_{a_N})u_{a_N}\| \leq \frac{k(N) \sigma_N}{\sqrt{a_N}}  \text{ eventually a.s.}$$ 
       This gives that,
      $$ \|(\mu_{a_N}T_{a_N}-X_{a_N})u_{a_N}\| \to 0 \text{ a.s.}$$ 
      $$ \implies \|(\mu_{a_N}T_{a_N}-X_{a_N})W^*_{a_N}u\| \to 0 \text{ a.s.}$$
      $$ \implies \|W_{a_N}(\mu_{a_N}T_{a_N}-X_{a_N})W^*_{a_N}u\| \to 0 \text{ a.s.}$$ 
       This is true for any unit vector $u$, and hence for any vector in general. 
         \end{proof}


    \begin{proof}[\textbf{Proof of Theorem \ref{sotthm}}]
    Lemma \ref{1.2} along with triangle inequality gives that  $\| (\mu V - \mu_{a_N} W_{a_N}T_{a_N}W_{a_N}^*)u\| \to 0$ for all $u \in L^2[0,1]$. Hence, 
    \begin{align*}
        & \|(\mu V - W_{a_N}X_{a_N}W_{a_N}^*)u\| \\  
        & \leq \| (\mu V - \mu_{a_N} W_{a_N}T_{a_N}W_{a_N}^*)u\| + \|W_{a_N}(\mu_{a_N}T_{a_N}-X_{a_N})W_{a_N}^*u\| \\ 
        & \to 0 \text{ a.s} 
    \end{align*}
    \end{proof}

\subsection{Remarks on SOT convergence}

 \begin{enumerate}
    \item The above theorem is rather powerful. For example, if the variance goes to 0 at a rate faster than $\frac{1}{N^{\epsilon}}$ for some $\epsilon > 0$, then we have guaranteed convergence for any sequence $a_N$. In particular for $a_N=N$ which gives $W_NX_NW_N^*(u) \to \mu V(u)$ a.s. (choose $k(N)= N^{\frac{1+\epsilon}{2}}.$ ).
        \item The sequence $\{k(N)\}$ may not exist in some cases. For example, let $\sigma_N=\sigma$ be constant. Then if $a_N=N$, we do not have any sequence which achieves the goal. This implies that if all the random variables comes from the same distribution independent of size of matrix N, then the above theorem cannot guarantee convergence to the Volterra operator for the random matrices. 
        \item If norm of random matrices can be bounded uniformly a.s. then we can conclude true SOT convergence. 
        
    \end{enumerate}
    
    An important case for convergence (for $a_N=2^N$) can be seen in the corollary below.
    \begin{corollary}
    $\forall u \in L^2[0,1]$ $\|(\mu V -W_{2^N}X_{2^N}W_{2^N}^*)u\| \rightarrow 0$ a.s. whenever for $j \leq i \leq N$, $X_{i,j}^N$ are iid random variables (independent of N) with mean $\mu$ and finite variance $\sigma$. 
    \end{corollary}
    
    \begin{proof} 
    Choose $k(N) = 2^{N/4}$
    \end{proof}

    \section{WOT-like convergence}
    
    Let $X_N$ be as earlier. We have the following variance bound.

    \begin{lemma} Let $u,v$ be vectors in $\mathbb{C}^N$, then
        $$E(\langle( X_N- \mu_N T_N )u, v\rangle ) = 0$$ and,
        $$E(|\langle( X_N- \mu_N T_N) u, v\rangle|^2) \leq \frac{\sigma_N^2}{N^2}\sum_{i=1}^N \sum_{j=1}^i |v_i|^2|u_j|^2  \leq \frac{\sigma_N^2}{N^2} \|u\|^2\|v\|^2$$
    \end{lemma}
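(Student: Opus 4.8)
The plan is to mirror, almost verbatim, the variance computation used for the analogous SOT-type bound in Section \ref{sotsect}. For the first identity, I would observe that the matrix $X_N-\mu_N T_N$ has entries $\tfrac1N(X_{i,j}^N-\mu_N)$ for $j\le i$ and $0$ otherwise, each of mean zero, and that $\langle(X_N-\mu_N T_N)u,v\rangle$ is a finite linear combination of these entries with deterministic coefficients $\overline{v_i}u_j/N$; linearity of expectation then gives $E\langle(X_N-\mu_N T_N)u,v\rangle=0$ immediately.

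For the variance bound, I would expand
\[
\langle(X_N-\mu_N T_N)u,v\rangle=\frac1N\sum_{i=1}^N\overline{v_i}\sum_{j=1}^i (X_{i,j}^N-\mu_N)u_j,
\]
take the squared modulus, and push the expectation inside the resulting sum over pairs $(i,j)$ and $(i',j')$ with $j\le i$, $j'\le i'$. The one point that has to be used is that the family $\{X_{i,j}^N:1\le j\le i\le N\}$ is independent, so for $(i,j)\neq(i',j')$ the centered variables $X_{i,j}^N-\mu_N$ and $\overline{X_{i',j'}^N-\mu_N}$ are uncorrelated and contribute nothing, while each diagonal term contributes $E|X_{i,j}^N-\mu_N|^2=\sigma_N^2$. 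This produces exactly $\frac{\sigma_N^2}{N^2}\sum_{i=1}^N\sum_{j=1}^i |v_i|^2|u_j|^2$. The final inequality is then routine: for each fixed $i$ one has $\sum_{j=1}^i|u_j|^2\le\|u\|^2$, and summing $\sum_{i=1}^N|v_i|^2=\|v\|^2$ gives the bound $\frac{\sigma_N^2}{N^2}\|u\|^2\|v\|^2$.

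There is no genuine obstacle here; the only thing requiring care is the bookkeeping of the double sum and the verification that the off-diagonal cross terms vanish. Since that step needs only pairwise independence together with the vanishing of the first moment after centering, it goes through without change even when the entries are complex-valued.
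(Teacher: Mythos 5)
Your proof is correct and follows essentially the same route as the paper's, which simply asserts the bound is ``direct after expanding and using triangle inequality.'' If anything, your version supplies the justification that actually makes the middle expression come out: the off-diagonal cross terms vanish because the centered entries are independent with mean zero (the triangle inequality alone would not yield the claimed $\sum |v_i|^2|u_j|^2$ bound), and that is precisely the point you isolate.
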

    
    \begin{proof}
    First equality is direct. The second inequality is also direct after expanding and using triangle inequality.
    \end{proof}

    So
    \begin{equation}\label{borcantwot}
         P\bigg(|\langle (X_N-\mu_N T_N) u, v\rangle|>k(N) \frac{\sigma_N}{N}\|u\|\|v\|\bigg)\leq \frac{1}{k(N)^2}
        \end{equation}
    and therefore we obtain Theorem \ref{wotthm} via a similar argument to the proof of Theorem \ref{sotthm}. 
    
    Equation \eqref{borcantwot} gives us that unlike the SOT-like case, whenever $\mu_N \to \mu$, we do have $\langle W_{N}X_{N}W_{N}^*u,v \rangle \rightarrow \langle \mu Vu,v \rangle$ a.s., whenever $\{\sigma_N\}$ is uniformly bounded.\\
    
     Let $X_N$ be a random lower triangular matrix such that an entry is $\frac{1}{\delta (N) N}$ with probability $\delta (N)$ and $0$ otherwise. This gives mean, $\mu_N=1$ and variance, $\sigma_N^2 = \frac{1-\delta (N)}{\delta (N)}$. Then, 
     \begin{equation}
        P\bigg(|\langle (X_N-T_N) u, v\rangle|>k(N) \sqrt{\frac{1-\delta (N)}{\delta (N) N^2}}\|u\|\|v\| \bigg)\leq \frac{1}{k(N)^2}
    \end{equation}
    
    \begin{itemize}
        \item  If $\delta (N)$ is bounded below uniformly, then $\langle W_NX_NW_N^*u,v \rangle \to \langle Vu,v \rangle$ a.s. (Choose $k(N)=N^{1/2+\varepsilon}$).\\
        \item If $\delta(N)= N^{-d}$, and $d<1$, we can show that we still have WOT-like convergence (choose $k(N)= N^{\frac{(3-d)}{4}}$). If $d \geq 1$, theorem \ref{wotthm} cannot guarantee WOT like convergence.
        
    \end{itemize}

\section{Asymptotic distribution of $X_N^*X_N$ and $NX_N^*X_N$}

We will begin with the following observation about the deterministic matrix $T_N$. For fixed $1 \leq k \leq N$, let $\mathbb{1}_k^N$ be N by N deterministic matrix with entry  $(\mathbb{1}_k^N)_{ij}= 1$ if $i,j \leq k$ and 0 otherwise.

\vspace{8pt}
\centering{$$\mathbb{1}_k^N = 
\begin{bmatrix}
1 & \ldots & 1 & 0 & \ldots  & 0\\
\vdots & \ddots  & \vdots & \vdots & \ddots & \vdots\\
1 & \hdots  & 1 & 0 & \hdots & 0\\
0 & \ldots & 0 & 0 & \ldots  & 0 \\
\vdots & \ddots & \vdots & \vdots & \ddots & \vdots \\
0 & \ldots & 0 & 0 & \ldots  & 0 \\
\end{bmatrix}$$}

\vspace{8pt}

\begin{lemma} \label{lem1}
$$\lfloor N/2 \rfloor^{2n} \leq Tr(N^2(T_N^*T_N)^n) \leq N^{2n}$$  for all  $n \geq 1.$
\end{lemma}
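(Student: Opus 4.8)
The plan is to pass to the positive semidefinite matrix $M := N^2\,T_N^*T_N = (NT_N)^*(NT_N)$, for which $\mathrm{Tr}\!\big(N^2(T_N^*T_N)^n\big) = \mathrm{Tr}(M^n)$. Since $NT_N$ is the lower triangular all-ones matrix, computing the Gram matrix entrywise gives $M_{jk} = \#\{i : i\ge j \text{ and } i\ge k\} = N+1-\max(j,k)$ (equivalently $M = \sum_{k=1}^N \mathbb{1}_k^N$), and in particular every entry satisfies $1 \le M_{jk} \le N$. Everything will then run off the expansion $\mathrm{Tr}(M^n) = \sum_{i_1,\dots,i_n=1}^N M_{i_1i_2}M_{i_2i_3}\cdots M_{i_ni_1}$: this is a sum of $N^n$ terms, each a product of $n$ entries of $M$ and hence nonnegative, so one is free both to bound each term via the entry bounds and to discard terms wholesale.

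For the upper bound I would simply observe that each of the $N^n$ terms is a product of $n$ entries, each $\le N$, hence is $\le N^n$; summing gives $\mathrm{Tr}(M^n) \le N^n\cdot N^n = N^{2n}$. (One could instead use $\sum_k\lambda_k^n \le (\sum_k\lambda_k)^n = (\mathrm{Tr}\,M)^n$ with $\mathrm{Tr}\,M = N(N+1)/2 \le N^2$, but the combinatorial estimate is cleaner and mirrors the lower bound.)

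For the lower bound I would set $m := \lfloor N/2\rfloor$; if $m=0$ (that is, $N=1$) the claim $\lfloor N/2\rfloor^{2n}=0 \le \mathrm{Tr}(M^n)$ is trivial, so assume $m\ge 1$. Keep in the expansion only the $m^n$ terms whose indices all lie in $\{1,\dots,m\}$; since every term is nonnegative this can only decrease the sum. For such a term each factor satisfies $M_{i_ai_{a+1}} = N+1-\max(i_a,i_{a+1}) \ge N+1-m \ge m$, the last inequality because $2m = 2\lfloor N/2\rfloor \le N$. Hence each retained term is $\ge m^n$, so $\mathrm{Tr}(M^n) \ge m^n\cdot m^n = m^{2n} = \lfloor N/2\rfloor^{2n}$, which is the claim.

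I expect no real obstacle here: the entire content is the two elementary facts that every entry of $M = N^2T_N^*T_N$ lies in $[1,N]$ and that $N+1-\lfloor N/2\rfloor \ge \lfloor N/2\rfloor$, combined with the trivial but essential point that all terms in the expanded trace are nonnegative, which is what lets one restrict to a convenient cube of indices for the lower estimate. The only things requiring attention are the floor arithmetic and the harmless $N=1$ edge case.
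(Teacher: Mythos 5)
Your proof is correct and follows essentially the same route as the paper's: both identify $N^2T_N^*T_N$ explicitly (you via its entries $N+1-\max(j,k)$, the paper via the decomposition $\sum_{k=1}^N\mathbb{1}_k^N$, which you note coincide), expand the trace into $N^n$ nonnegative terms each bounded above by $N^n$, and for the lower bound discard all but a half-cube of $\lfloor N/2\rfloor^n$ index tuples on which each term is at least $\lfloor N/2\rfloor^n$. The only cosmetic difference is that you sum over matrix coordinates and restrict them to $\{1,\dots,\lfloor N/2\rfloor\}$, whereas the paper sums over the labels of the summands $\mathbb{1}_k^N$ and restricts those to $\{\lfloor N/2\rfloor,\dots,N\}$.
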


\begin{proof}
Basic computations show that,
\begin{equation}
    N^2 (T_N^*T_N) = \sum_{k=1}^{N} \mathbb{1}_k^N
\end{equation}

With this piece of information, we can see that,
\begin{align*}
    Tr(N^2 (T_N^*T_N))^n &= Tr(\sum_{k=1}^{N} \mathbb{1}_k^N )^n \nonumber \\ 
    &= \sum_{i_1,...,i_n=1}^N Tr( \mathbb{1}^N_{i_1} \mathbb{1}^N_{i_2}...\mathbb{1}^N_{i_n})\\
    & \leq \sum_{i_1,...,i_n=1}^N Tr( {\mathbb{1}^N_{N}})^n \nonumber \\
    & = \sum_{i_1,...,i_n=1}^N N^n \nonumber \\
    &= N^n (\sum_{i_1,...,i_n=1}^N1)  \nonumber \\
    & = N^n N^n = N^{2n}. \nonumber
\end{align*} 
\\

This gives the upper bound. For lower bound, we observe that we can restrict indices $\lfloor N/2 \rfloor \leq i_l \leq N $ for all $l=1,...,n$. Under this restriction, $$Tr( \mathbb{1}^N_{i_1} \mathbb{1}^N_{i_2}...\mathbb{1}^N_{i_n}) \geq Tr( \mathbb{1}^N_{\lfloor N/2 \rfloor})^n = (\lfloor N/2 \rfloor)^n .$$ So, we get
\begin{align*}
\sum_{i_1,...,i_n=1}^N Tr( \mathbb{1}^N_{i_1} \mathbb{1}^N_{i_2}...\mathbb{1}^N_{i_n}) &
\geq \sum_{i_1,...,i_n=\lfloor N/2 \rfloor}^N Tr( \mathbb{1}^N_{i_1} \mathbb{1}^N_{i_2}...\mathbb{1}^N_{i_n})\\
&\geq (\lfloor N/2 \rfloor)^n (\sum_{i_1,...,i_n=\lfloor N/2 \rfloor}^N 1)\\
&\geq (\lfloor N/2 \rfloor)^n (\lfloor N/2 \rfloor)^n \\
&= (\lfloor N/2 \rfloor)^{2n}. 
\end{align*} This proves the lower bound.
\end{proof}

\begin{lemma}
Let $\{X^N_{ij}\}$ be uniformly bounded by constant K a.s. Then, $tr((X_N^*X_N)^n) \to 0$ a.s as $N \to \infty$ for all $n\geq 1$.
\end{lemma}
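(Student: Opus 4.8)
The plan is to show that once we restrict to the full-measure event on which the boundedness hypothesis holds, the conclusion is entirely deterministic: the normalized trace $\operatorname{tr}\big((X_N^*X_N)^n\big)$ will be bounded above by a constant times $1/N$, and no probabilistic machinery beyond that restriction is needed. So I would begin by fixing the event $\Omega_0$ on which $|X_{ij}^N|\le K$ for every $N$ and every $1\le j\le i\le N$; since this is a countable intersection of probability-one events it has probability one, and it suffices to argue pathwise on $\Omega_0$.

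The first step is a crude Hilbert--Schmidt estimate coming from dimension counting. On $\Omega_0$ there are about $N^2/2$ nonzero entries of $X_N$, each of modulus at most $K/N$, so
$$\operatorname{Tr}(X_N^*X_N)=\|X_N\|_{HS}^2=\frac{1}{N^2}\sum_{1\le j\le i\le N}|X_{ij}^N|^2\le \frac{(N+1)K^2}{2N}\le K^2 .$$
Since the operator norm is dominated by the Hilbert--Schmidt norm, this also yields a uniform operator-norm bound $\|X_N^*X_N\|_{op}=\|X_N\|_{op}^2\le \|X_N\|_{HS}^2\le K^2$.

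The second step converts these two bounds into control of the trace of the $n$-th power. Writing the eigenvalues of the positive matrix $X_N^*X_N$ as $\lambda_1\ge\cdots\ge\lambda_N\ge 0$, we get $\operatorname{Tr}\big((X_N^*X_N)^n\big)=\sum_k\lambda_k^n\le \lambda_1^{\,n-1}\sum_k\lambda_k\le K^{2(n-1)}\cdot K^2=K^{2n}$, using the two bounds from the first step. Reading $\operatorname{tr}$ as the normalized trace $\tfrac1N\operatorname{Tr}$ (as in the passage from $\operatorname{Tr}$ to $\operatorname{tr}$ in the preceding lemma), this gives $\operatorname{tr}\big((X_N^*X_N)^n\big)\le K^{2n}/N\to 0$. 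Since the estimate holds at every sample point of $\Omega_0$, the almost-sure convergence follows for each fixed $n\ge 1$.

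As for the main obstacle: there is essentially none of substance here, since the hypothesis is strong enough that the single dimension-counting bound together with the trivial eigenvalue inequality does all the work. The only points needing care are bookkeeping ones: making sure the boundedness hypothesis is used as holding on one full-measure event uniformly in $N$ (so the pathwise argument is legitimate), and keeping track of the factor $1/N$ between $\operatorname{Tr}$ and $\operatorname{tr}$, which is exactly what produces the decay. I would also note in passing that the same computation applied to the deterministic matrix $T_N$ shows $\operatorname{tr}\big((T_N^*T_N)^n\big)\to 0$, consistent with Lemma \ref{lem1}.
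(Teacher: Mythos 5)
Your proof is correct, but it takes a genuinely different route from the paper. The paper dominates the trace by that of the deterministic matrix: it uses $tr((X_N^*X_N)^n)\le tr((K^2T_N^*T_N)^n)$ and then invokes the combinatorial bound $Tr(N^2(T_N^*T_N)^n)\le N^{2n}$ from Lemma \ref{lem1} to get the same final estimate $K^{2n}/N$. You instead argue directly on each sample path via a Hilbert--Schmidt computation: $Tr(X_N^*X_N)\le K^2$ by counting the $N(N+1)/2$ entries of modulus at most $K/N$, the operator norm of $X_N^*X_N$ is at most $K^2$ since the operator norm is dominated by the Hilbert--Schmidt norm, and then $\sum_k\lambda_k^n\le\lambda_1^{n-1}\sum_k\lambda_k\le K^{2n}$. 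Your version is more self-contained (it does not rely on Lemma \ref{lem1}) and it quietly sidesteps a point the paper glosses over: the comparison $tr((X_N^*X_N)^n)\le tr((K^2T_N^*T_N)^n)$ is really a statement about term-by-term domination in absolute value of the expansion of the trace, which needs a triangle-inequality step since the individual summands in $Tr((X_N^*X_N)^n)$ need not be nonnegative. The paper's approach, on the other hand, fits into its broader program of reducing everything to the combinatorics of $T_N$, which is reused in the subsequent moment lemmas. Your care in fixing a single full-measure event $\Omega_0$ uniform in $N$ is also a point the paper leaves implicit.
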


\begin{proof}

We observe that, $$tr((X_N^*X_N)^n) \leq tr(K^2T_N^*T_N)^n \text{ } a.s$$. By Lemma \ref{lem1}, we have that 
\begin{align*}
 Tr(K^2T_N^*T_N)^n &\leq K^{2n}\\
 \implies tr(K^2T_N^*T_N)^n &\leq (K^{2n})/N \\
 \implies tr((X_N^*X_N)^n) &\leq (K^{2n})/N \to 0 \text{ } a.s  \text{ as } N \to \infty.    
\end{align*}

\end{proof}





\begin{lemma} \label{lem4}
Let $\{X^N_{ij}\}$ be iid random variables with finite moments. Then, $E[tr((X_N^*X_N)^n)] \to 0$ as $N \to \infty$ for all $n\geq 1$.
\end{lemma}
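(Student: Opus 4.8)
\textbf{Proof proposal for Lemma \ref{lem4}.}

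The plan is to reduce the expected trace to a computation involving only the moments of a single entry, and then to bound everything by the deterministic estimate of Lemma \ref{lem1}. First I would expand $\operatorname{tr}((X_N^*X_N)^n)$ as a normalized sum over closed index walks: writing $X_N = \frac1N(X^N_{ij})$ with $X^N_{ij}=0$ for $j>i$, the quantity $N^{2n}\operatorname{tr}((X_N^*X_N)^n)$ is $\frac1N \sum \overline{X^N_{i_1 j_1}} X^N_{i_2 j_1}\overline{X^N_{i_2 j_2}} X^N_{i_3 j_2}\cdots$ over all index tuples, each summand being a product of $2n$ entries of the matrix. Taking expectations and using independence, a summand contributes $0$ unless every entry appearing in it is matched by another occurrence of the \emph{same} $(i,j)$ pair; so the only surviving terms are those where the multiset of index-pairs has every element with multiplicity at least $2$. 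In particular each surviving term is bounded in absolute value by $\max_{m\le 2n} E[|X^N_{11}|^m] =: C_n$ (a finite constant, by the finite-moments hypothesis, and independent of $N$ if the entries are iid independent of $N$ as well — otherwise one takes a uniform bound on the relevant moments, which I will assume available or absorb into the statement).

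Next I would count the surviving terms. Because each of the $2n$ positions in the walk carries an index pair that must be repeated, the number of \emph{free} indices among $i_1,\dots,i_n,j_1,\dots,j_n$ is at most $n$ (heuristically: a walk on $2n$ edges in which every edge is traversed at least twice visits at most $n+1$ distinct vertices, hence, accounting for the bipartite row/column structure, at most $n$ of the $2n$ index slots are free). Hence the number of nonzero summands is $O(N^n)$, with an implied constant depending only on $n$. Combining with the per-term bound $C_n$, we get
\begin{equation*}
E\big[N^{2n}\operatorname{tr}((X_N^*X_N)^n)\big] = \frac1N \cdot O(N^n)\cdot C_n,
\end{equation*}
so that $E[\operatorname{tr}((X_N^*X_N)^n)] = O(N^{-1})\cdot C_n / N^{?}$ — more carefully, $E[\operatorname{tr}((X_N^*X_N)^n)] = N^{-2n}\cdot \frac1N\cdot O(N^n)\cdot C_n = O(N^{n-2n-1}) = O(N^{-n-1}) \to 0$. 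An even softer route, avoiding the vertex count, is to dominate crudely: since only matched terms survive and each is at most $C_n$, one can compare the sum of surviving terms to $C_n \cdot N^n \cdot N^n / N^n = \dots$; but the clean comparison is to bound $E[N^{2n}\operatorname{tr}((X_N^*X_N)^n)]$ directly against $C_n'\operatorname{Tr}(N^2 T_N^*T_N)^n \le C_n' N^{2n}$ from Lemma \ref{lem1} after replacing $|X^N_{ij}|$ by the constant $\sqrt{C_n'}$ inside a matched term — giving $E[\operatorname{tr}((X_N^*X_N)^n)] \le C_n'/N \to 0$, which suffices.

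The main obstacle is the combinatorial bookkeeping in the middle step: making the "every edge repeated $\Rightarrow$ at most $n$ free indices" count fully rigorous, including the boundary cases where pairs coincide in degenerate ways, and making sure the constant genuinely depends only on $n$ and on a finite list of moments (uniformly in $N$). If the entries' distribution is allowed to depend on $N$, I would need to add a hypothesis that the moments $E[|X^N_{11}|^{m}]$ for $m\le 2n$ are bounded uniformly in $N$; I expect this is the intended reading, matching the style of the earlier hypotheses in the paper. Everything else — linearity of expectation, independence killing unmatched terms, and the final $1/N$ from the normalization $\operatorname{tr} = \frac1N\operatorname{Tr}$ — is routine.
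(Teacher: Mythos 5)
Your main argument contains a genuine error: the claim that a summand contributes $0$ unless every index pair appearing in it is matched requires the entries to have mean zero, which is not assumed here — the lemma only assumes finite moments, and indeed the very next lemma in the paper computes the contribution of the terms with all-distinct index pairs as $\mu^{2n}\neq 0$. Without mean zero, essentially all of the order-$N^{2n}$ admissible index tuples survive (the lower-triangular constraint still leaves at least $\lfloor N/2\rfloor^{2n}$ of them, as in the lower bound of Lemma \ref{lem1}), so your count of $O(N^n)$ nonzero summands and the resulting rate $O(N^{-n-1})$ are both wrong. The lemma nevertheless holds for the cruder reason you mention only in passing at the end: every term, matched or not, has expectation bounded in absolute value by a constant $M_n$ depending only on $n$ and on finitely many moments of a single entry. (Since the entries are iid, $E[X^N_{i_1j_1}X^N_{i_2j_1}\cdots X^N_{i_1j_n}]$ factors over the distinct index pairs and takes one of finitely many values determined by which pairs coincide; alternatively, bound each factor by H\"older.) There are at most $N^{2n}$ terms, so $E[\mathrm{Tr}((N^2X_N^*X_N)^n)]\le M_nN^{2n}$, and dividing by $N^{2n}$ and by the extra $N$ from the normalized trace gives $E[\mathrm{tr}((X_N^*X_N)^n)]\le M_n/N\to 0$. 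This is exactly the paper's proof; it needs no matching structure, no vertex count, and no appeal to Lemma \ref{lem1}.

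Your "softer route" paragraph points in the right direction but is garbled as written: you cannot simply "replace $|X^N_{ij}|$ by the constant $\sqrt{C_n'}$ inside a matched term," because the bound has to be applied to the expectation of a product of (possibly repeated, hence dependent) factors rather than to the random variables themselves; the correct step is the per-term moment bound described above. Your caveat about uniformity in $N$ is well taken: the paper implicitly reads the hypothesis as saying the common distribution, and hence its moments, does not depend on $N$.
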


\begin{proof}

We observe that, after expanding $Tr(N^2(X_N^*X_N))^n$, there are at most $N^{2n}$ terms  of the form $X^N_{i_1j_1}X^N_{i_2j_1}X^N_{i_2j_2}...X^N_{i_nj_n}X^N_{i_1j_n}$ for $i_l,j_k \in {1,...,N}$ and $l,k \in \{1,...,n\}$. Since $X_{ij}$'s are iid, we get that, for fixed $n$, expectation of each term can take value from a finite set of numbers independent of N. For example, it can be $E[X_{11}]^{2n}$, if the pairs $(i_l,j_l)$ and $(i_{l+1},j_l)$ are all distinct, i.e, every random variable is independent of each other in the term. It can be $E[X_{11}^{2n}]$, if $(i_l,j_l)=(i_{l+1},j_{l+1})$ for all $l={1,...,n-1}$, i.e., we have the same random variable multiplied $2n$ times. This gives that there are finitely many values that each term in trace expansion can take. Let $M_n$ be the maximum absolute value in this set. Thus, each term $|X^N_{i_1j_1}X^N_{i_2j_1}X^N_{i_2j_2}...X^N_{i_nj_n}X^N_{i_1j_n}| \leq M_n$ for all $i_l,j_k \in \{1,...,N\}$ and $l,k \in \{1,...,n\}$, independent of $N$. Since there are at most $N^{2n}$ such terms, we have that $E[Tr(N^2(X_N^*X_N)^n)] \leq M_nN^{2n}$, which gives, $E[tr(X_N^*X_N)^n] \leq M_n/N \to 0$. Hence the claim.

\end{proof}

\begin{lemma}
Let $\{X^N_{ij}\}$ be collection of iid random variables with mean, $\mu \neq 0$. Then, $E[tr((NX_N^*X_N)^n)] \to \infty$ as $N \to \infty$ for all $n\geq 2$. For $n=1$, $E[tr(NX_N^*X_N)] \to (\sigma^2 + \mu^2)/2$ as $N \to \infty$.
\end{lemma}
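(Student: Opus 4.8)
## Proof Proposal

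The plan is to reduce the computation of $E[\operatorname{tr}((NX_N^*X_N)^n)]$ to a combinatorial sum over the trace expansion, exactly as in the proof of Lemma \ref{lem4}, but now tracking which terms survive and dominate rather than merely bounding them. Writing out the trace, we have
$$
\operatorname{tr}((NX_N^*X_N)^n) = \frac{1}{N}\sum X^N_{i_1 j_1}X^N_{i_2 j_1}X^N_{i_2 j_2}\cdots X^N_{i_n j_n}X^N_{i_1 j_n},
$$
where the sum runs over all indices $i_1,\dots,i_n,j_1,\dots,j_n$ subject to the lower-triangular constraints $j_\ell \le i_\ell$ and $j_\ell \le i_{\ell+1}$ (indices cyclic mod $n$). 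After taking expectations, a term contributes $\prod_\ell E[X_{11}^{m_\ell}]$ where the $m_\ell$ record the multiplicities of the distinct random variables appearing; the dominant contribution comes from the generic case in which all $2n$ factors are distinct random variables, so each such term contributes $\mu^{2n}$. First I would show that the number of index tuples for which at least two of the $2n$ factors coincide is $O(N^{2n-1})$, so after dividing by $N$ these collectively contribute $O(1)$ (this is the same counting estimate that drove Lemma \ref{lem4}), hence they are harmless for the $n\ge 2$ claim about divergence and must be handled exactly for the $n=1$ claim.

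The heart of the matter is therefore asymptotically counting the "generic" tuples. For those, $E[\text{term}] = \mu^{2n}$, so we need the asymptotics of
$$
N^{2n}\cdot \#\{(i_1,\dots,i_n,j_1,\dots,j_n) : j_\ell \le i_\ell,\ j_\ell \le i_{\ell+1}\ \text{for all } \ell\}\Big/ N\,,
$$
which I expect to behave like $c_n N^{2n - 1 + \text{(something)}}$. Concretely, the count of valid $(i,j)$-tuples is $\sim N^{2n}\cdot \operatorname{vol}(P_n)$ where $P_n \subset [0,1]^{2n}$ is the polytope cut out by $y_\ell \le x_\ell$ and $y_\ell \le x_{\ell+1}$; equivalently this is $N^{2n}$ times the leading Riemann-sum constant, which is precisely $\lim_N \operatorname{Tr}(N^2(T_N^*T_N)^n)/N^{2n}$ — a quantity Lemma \ref{lem1} already pins between $(\lfloor N/2\rfloor/N)^{2n}$ and $1$. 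So the clean way to organize this: $NX_N^*X_N$ is, up to lower-order fluctuations, $\mu^2 \cdot N^2 T_N^*T_N$, and $\operatorname{tr}((N^2\mu^2 T_N^*T_N)^n) = \mu^{2n} N^{2n-1}\operatorname{tr}((N^2 T_N^*T_N)^n) \cdot N^{-(2n-1)}$... I would make this precise by writing $X_N = \mu T_N + (X_N - \mu T_N)$, expanding $(NX_N^*X_N)^n$ multilinearly, and arguing that the pure term $\mu^{2n}(N^2 T_N^*T_N)^n/N$ has trace $\ge \mu^{2n}\lfloor N/2\rfloor^{2n}/N \to \infty$ for $n\ge 2$ by Lemma \ref{lem1}, while every mixed term, having at least one centered factor, contributes $O(1)$ in expectation by the Lemma \ref{lem4}-style counting (a centered factor $X_{ij}-\mu$ forces an index coincidence to get nonzero expectation, killing a power of $N$). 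This establishes divergence to $+\infty$ for $n\ge 2$.

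For $n=1$, everything is explicit: $\operatorname{tr}(NX_N^*X_N) = \frac{1}{N}\sum_{i,j:\, j\le i} E[|X^N_{ij}|^2] = \frac{1}{N}\cdot\frac{N(N+1)}{2}\cdot(\sigma^2+\mu^2) \to (\sigma^2+\mu^2)/2$, which is the stated limit. The main obstacle I anticipate is not the $n=1$ case but making the "mixed terms are $O(1)$" claim for $n\ge 2$ fully rigorous: one must verify that inserting even a single centered factor into the length-$2n$ cyclic product genuinely costs a factor of $N$ in the expectation, which requires checking that the lower-triangular index constraints don't conspire with an index identification to leave $2n$ free indices — but since any forced coincidence among the $\{(i_\ell,j_\ell)\}$ pairs reduces the number of free indices by at least one, and the constraint set is a union of $O(1)$-many faces each of dimension $\le 2n-1$, the count is $O(N^{2n-1})$ and the division by $N$ yields $O(1)$. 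Once that lemma-style bound is in hand, the triangle inequality $E[\operatorname{tr}((NX_N^*X_N)^n)] \ge \mu^{2n}\lfloor N/2\rfloor^{2n}/N - O(1)$ closes the argument.
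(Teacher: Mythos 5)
Your strategy is essentially the paper's: expand the trace, observe that tuples in which all $2n$ factors are distinct each contribute $\mu^{2n}$ and number $\Theta(N^{2n})$ (with the lower bound coming from the same restriction $i_\ell \ge \lfloor N/2\rfloor$ that drives Lemma \ref{lem1}), while tuples with a coincidence number $O(N^{2n-1})$ and have uniformly bounded expectation, so the generic terms dominate and force divergence for $n\ge 2$; the $n=1$ case is the same explicit computation the paper gives. Your alternative packaging via $X_N = \mu T_N + (X_N-\mu T_N)$ is a legitimate and arguably cleaner way to isolate the main term as $\mu^{2n}\operatorname{tr}\big((NT_N^*T_N)^n\big)$ and invoke Lemma \ref{lem1} directly, at the cost of controlling the $2^{2n}-1$ mixed terms (each of which either has a lone centered factor, hence vanishing expectation, or at least two, hence a forced index coincidence).

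The one thing you must fix is the normalization, which as written breaks your own $n=1$ computation. Writing $Y_N=(X^N_{ij})$ for the unnormalized entry matrix, $X_N=\frac1N Y_N$ gives $(NX_N^*X_N)^n = N^{-n}(Y_N^*Y_N)^n$, and the normalized trace contributes another $N^{-1}$, so
$$\operatorname{tr}\big((NX_N^*X_N)^n\big)=\frac{1}{N^{n+1}}\sum X^N_{i_1j_1}X^N_{i_2j_1}\cdots X^N_{i_1j_n},$$
not $\frac1N\sum(\cdots)$. With your prefactor, the $n=1$ display reads $\frac1N\cdot\frac{N(N+1)}{2}(\sigma^2+\mu^2)=\frac{N+1}{2}(\sigma^2+\mu^2)\to\infty$, contradicting the claimed limit; with the correct prefactor $N^{-2}$ it converges to $(\sigma^2+\mu^2)/2$ as stated. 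Likewise the coincidence/mixed terms contribute $O(N^{2n-1})/N^{n+1}=O(N^{n-2})$, not $O(1)$; this is still little-$o$ of the main term, which is of order $\mu^{2n}N^{n-1}$, so the divergence conclusion for $n\ge2$ survives, but the closing inequality should read $E[\operatorname{tr}((NX_N^*X_N)^n)]\ge c_n\mu^{2n}N^{n-1}-O(N^{n-2})$ rather than involving $\lfloor N/2\rfloor^{2n}/N$.
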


\begin{proof}


If we expand $Tr((NX_N^*X_N)^n)$,  we get that each term is of the form, $N^n(X^N_{i_1j_1}X^N_{i_2j_1}X^N_{i_2j_2}...X^N_{i_nj_n}X^N_{i_1j_n})$ for $i_l,j_k \in {1,...,N}$ and $l,k \in {1,...,n}$. Note that, the term equals 0 if $i_l < j_l$ or $i_{l+1} < j_l$, i.e. $j_l \leq min\{i_l,i_{l+1}\}$. While $i_l$'s are free to take any value from $\{1,...,N\}$ and $j_l,j_{l-1}$ are restricted due to that, we can restrict $i_l \geq \lfloor N/2 \rfloor$ for all $l=1,..,n$. Total such possibilities are at least $(N/2)^n$. Moreover, each $j_l$ is free to take value till $\lfloor N/2 \rfloor$. Number of terms following this constraint are of order $O(N^{2n})$. Also, the number of paths $i_1 \to j_1 \to i_2 \to j_2 ... \to j_n \to i_1$, under the restriction that at least a pair of numbers $i_l,j_k$ is same, is of order $O(N^{2n-1})$.  Hence, terms with all distinct random variables $X_{i_1j_1}X_{i_2j_1}X_{i_2j_2}...X_{i_nj_n}X_{i_1j_n}$ grows as $O(N^{2n})$, while the remaining terms grow at $O(N^{2n-1})$ . If all random variable are distinct, we get that $E[X_{i_1j_1}X_{i_2j_1}X_{i_2j_2}...X_{i_nj_n}X_{i_1j_n}] = \mu^{2n}$. Summing over each such term (number of such terms is bigger that $K_nN^2$ for some positive $K_n$) gives that $E[tr((NX_N^*X_N)^n)] \to \infty$ as $N \to \infty$.\\
For $n=1$, we know that for any matrix $A,$ $Tr(A^*A)$ is equal to the square sum of its entries. So, $E[tr(NX_N^*X_N)] = \frac{1}{N^2} \sum_{i=1}^N \sum_{j=1}^i E[X_{ij}^2]  = \frac{N(N+1)/2}{N^2} (\sigma^2 + \mu^2) \to \frac{(\sigma^2 + \mu^2)}{2}$ as $N \to \infty$.

\end{proof}

	\bibliographystyle{unsrt}
	\bibliography{reference}	

\end{document}